\numberwithin{equation}{section}
\newtheorem{theorem}{Theorem}[section]
\newtheorem{proposition}[theorem]{Proposition}
\newtheorem{lemma}[theorem]{Lemma}
\newtheorem{definition}[theorem]{Definition}
\newtheorem{remark}[theorem]{Remark}
\newcommand{\R}{\mathbb{R}}
\newcommand{\T}{\mathbb{T}}
\newcommand{\diver}{\operatorname{div}}
\title{A Monotone--Operator Proof of Existence and Uniqueness\\
for a Simple Stationary Mean Field Game}
\author{H. Ismatov\footnote{Inspired by the work of R.~Ferreira, D.~A.~Gomes and M.~Ucer.}}
\date{\today}
\begin{document}
\maketitle

\begin{abstract}
We study a stationary first--order mean field game on the $d$--dimensional
torus.  The system couples a Hamilton--Jacobi equation for the value
function with a transport equation for the density of players.
Our goal is to give a detailed and friendly exposition of the
monotone--operator argument that yields existence and uniqueness of
solutions.

We first present a general framework in a Hilbert space and prove
existence of a strong solution by adding a simple coercive
regularisation and applying Minty's method.
Then we specialise to the explicit Hamiltonian
\[
H(p,m)=|p|^2-m,
\]
check all assumptions, and show how the abstract theorem gives
existence and uniqueness for this concrete mean field game.
The exposition is written in a slow and elementary way so that
a motivated undergraduate can follow each step.
\end{abstract}

\tableofcontents

% =========================================================
\section{Introduction}

Mean field games (MFGs) describe the behaviour of a large population of
weakly interacting agents who optimise a cost functional.  In the
stationary first--order setting on the $d$--dimensional torus
$\T^d = \R^d/\mathbb{Z}^d$, the unknowns are:
\begin{itemize}
  \item the value function $u:\T^d\to\R$ of a representative player;
  \item the density $m:\T^d\to[0,\infty)$ of the distribution of players.
\end{itemize}
The interaction is encoded in a Hamiltonian $H$ and in a potential $V$.

In this note we focus on the system
\begin{equation}\label{eq:MFG-system-intro}
\begin{cases}
- u(x) - H(Du(x),m(x)) - V(x) = 0,\\[0.2em]
m(x) - \diver\big(m(x)Du(x)\big) = 1,
\end{cases}
\qquad x\in\T^d,
\end{equation}
under the normalisation
\begin{equation}\label{eq:mass-one}
m(x)\ge 0,\qquad \displaystyle\int_{\T^d} m(x)\,dx = 1.
\end{equation}

Our main reference is the recent work of R.~Ferreira, D.~A.~Gomes and
M.~Ucer, who developed a monotone--operator theory for mean field games
in Banach spaces.  Their general framework covers quite general
Hamiltonians.  Here we restrict ourselves to a much simpler case in
order to explain the ideas in detail and in elementary language.

The main contributions of this paper are:
\begin{itemize}
  \item we define a natural operator $A$ associated with the MFG system
        \eqref{eq:MFG-system-intro} and explain why $A$ is monotone;
  \item we add a simple coercive perturbation $B$ and solve the
        regularised problem $(A+\varepsilon B)[m_\varepsilon,u_\varepsilon]=0$;
  \item we derive uniform \emph{a priori} bounds and pass to the limit
        $\varepsilon\to 0$ using Minty's method;
  \item we specialise the discussion to the concrete Hamiltonian
        \begin{equation}\label{eq:explicit-H}
        H(p,m) = |p|^2 - m
        \end{equation}
        and check all assumptions explicitly.
\end{itemize}

The paper is written as a review and a detailed example, not as a work
presenting new theorems.  The hope is that this text can serve as a
gentle introduction to monotone operators in the context of mean field
games.

% =========================================================
\section{The model and basic assumptions}
\label{sec:model}

We now set up the functional framework.  Throughout the paper, $\T^d$
denotes the $d$--dimensional flat torus, which we identify with
$[0,1]^d$ with periodic boundary conditions.

\subsection{The function spaces}

We work in the Hilbert space
\[
X := L^2(\T^d)\times H^1(\T^d)
\]
with norm
\[
\|(m,u)\|_X^2 := \|m\|_{L^2(\T^d)}^2 + \|u\|_{L^2(\T^d)}^2 + \|Du\|_{L^2(\T^d)}^2.
\]
We also consider the convex subset
\[
K := \Big\{ (m,u)\in X \,:\,
          m(x)\ge 0\ \text{a.e.},\ \int_{\T^d} m(x)\,dx = 1 \Big\}.
\]
The space $X$ is reflexive, and $K$ is closed and convex in $X$.

\subsection{The Hamiltonian and the potential}

We assume that
\begin{itemize}
  \item $V\in L^\infty(\T^d)$ is a given bounded potential;
  \item $H:\R^d\times[0,\infty)\to\R$ is of class $C^1$ and satisfies
        the structural assumptions below.
\end{itemize}

\begin{definition}[Structural assumptions on $H$]\label{def:H-assumptions}
We assume that for all $p_1,p_2\in\R^d$ and $m_1,m_2\ge 0$:
\begin{itemize}
  \item[(H1)] $H$ is convex in $p$ and nonincreasing in $m$; that is,
  \[
  H(\theta p_1+(1-\theta)p_2,m)
  \le \theta H(p_1,m)+(1-\theta)H(p_2,m)
  \]
  for all $\theta\in[0,1]$ and each fixed $m$, and
  \[
  m_1\le m_2 \ \Rightarrow\
  H(p,m_1)\ge H(p,m_2)
  \quad\text{for all }p\in\R^d.
  \]

  \item[(H2)] (Monotonicity inequality.)
  For all $p_1,p_2\in\R^d$ and $m_1,m_2\ge 0$,
  \begin{equation}\label{eq:H-monotone}
  \big(-H(p_1,m_1)+H(p_2,m_2)\big)(m_1-m_2)
  +\big(m_1D_pH(p_1,m_1)-m_2D_pH(p_2,m_2)\big)\cdot(p_1-p_2)\ge 0.
  \end{equation}
  Moreover, if $(p_1,m_1)\neq(p_2,m_2)$ and $m_1+m_2>0$, then the
  inequality is strict.

  \item[(H3)] (Quadratic growth.)
  There exists a constant $C>0$ such that
  \begin{equation}\label{eq:H-growth}
  |H(p,m)| + |D_pH(p,m)|^2
   \le C\big(1+|p|^2+m^2\big)
  \qquad\text{for all }p\in\R^d,\ m\ge 0.
  \end{equation}
\end{itemize}
\end{definition}

Assumptions (H1)–(H3) are simple but already sufficient for our
concrete example \eqref{eq:explicit-H}.  They are weaker than the
general conditions in the original paper but easier to verify.

\subsection{Weak and strong solutions}

We now state what we mean by a solution of the MFG system
\eqref{eq:MFG-system-intro}.

\begin{definition}[Strong solution]\label{def:strong-solution}
A pair $(m,u)\in K$ is a \emph{strong solution} of
\eqref{eq:MFG-system-intro} if
\begin{align}
- u - H(Du,m) - V &= 0 \quad\text{a.e. in }\T^d, \label{eq:HJ-strong}\\
m - \diver(mDu) &= 1 \quad\text{in the sense of distributions}. \label{eq:FP-strong}
\end{align}
\end{definition}

The transport equation \eqref{eq:FP-strong} can be written in weak form:
\begin{equation}\label{eq:FP-weak}
\int_{\T^d} m \varphi \,dx
 + \int_{\T^d} mDu\cdot D\varphi\,dx
 = \int_{\T^d} \varphi\,dx
 \qquad\forall\varphi\in C^\infty(\T^d).
\end{equation}
Because $m\in L^2$ and $Du\in L^2$, the integrals are well defined.

% =========================================================
\section{The monotone operator associated with the MFG}
\label{sec:operator}

\subsection{Definition of the operator}

We define a nonlinear operator $A:K\to X^*$ by duality:
for $(m,u),( \mu,v)\in K$ we set
\begin{equation}\label{eq:def-A}
\langle A[m,u],(\mu,v)\rangle
:= \int_{\T^d}\big(-u-H(Du,m)-V\big)\mu\,dx
  +\int_{\T^d}\big(mD_pH(Du,m)\cdot Dv + (m-1)v\big)\,dx.
\end{equation}
Here $\langle\cdot,\cdot\rangle$ denotes the duality pairing between
$X^*$ and $X$.

\begin{remark}
If $(m,u)$ is a strong solution, then plugging
$(\mu,v)=(\varphi,\psi)$ with arbitrary smooth test functions shows
that $A[m,u]=0$ in $X^*$.  Conversely, under mild regularity
assumptions, the identity $A[m,u]=0$ implies
\eqref{eq:HJ-strong} and \eqref{eq:FP-strong}.  Thus solving $A[m,u]=0$
is equivalent to solving the MFG system.
\end{remark}

\subsection{Monotonicity of $A$}

\begin{proposition}[Monotonicity of $A$]\label{prop:A-monotone}
Under assumptions {\rm(H1)}–{\rm(H3)}, the operator $A$ is monotone on
$K$, that is,
\[
\langle A[m_1,u_1]-A[m_2,u_2],(m_1-m_2,u_1-u_2)\rangle \ge 0
\]
for all $(m_1,u_1),(m_2,u_2)\in K$.
Moreover, the inequality is strict if $(m_1,u_1)\ne(m_2,u_2)$.
\end{proposition}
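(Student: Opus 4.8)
The plan is to prove monotonicity by a direct computation: expand \eqref{eq:def-A} at the two points $(m_1,u_1)$ and $(m_2,u_2)$, subtract, and pair the result against $(m_1-m_2,u_1-u_2)$. The guiding observation is that every term of $A$ that is \emph{affine} in $(m,u)$ --- the one coming from $-u$, the one coming from $-V$, and the one coming from the constant $-1$ inside $m-1$ --- cancels in pairs under this operation, and what is left is exactly the pointwise integrand of the monotonicity inequality \eqref{eq:H-monotone}. (Assumption (H3) enters only to guarantee that all the integrals involved are finite.)

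Carrying this out, write $p_i:=Du_i$. The $-V$ term contributes the same $\int_{\T^d}-V(m_1-m_2)\dx$ to both brackets and hence drops; the constant $-1$ inside $(m-1)v$ contributes the same $\int_{\T^d}-(u_1-u_2)\dx$ to both and drops; and the two genuinely bilinear leftovers, namely $\int_{\T^d}-(u_1-u_2)(m_1-m_2)\dx$ coming from the $-u\,\mu$ term and $\int_{\T^d}(m_1-m_2)(u_1-u_2)\dx$ coming from the $m v$ part of $(m-1)v$, cancel each other. What survives is precisely
\[
\langle A[m_1,u_1]-A[m_2,u_2],(m_1-m_2,u_1-u_2)\rangle
= \int_{\T^d}\Big[\big(-H(p_1,m_1)+H(p_2,m_2)\big)(m_1-m_2)
+\big(m_1D_pH(p_1,m_1)-m_2D_pH(p_2,m_2)\big)\cdot(p_1-p_2)\Big]\dx .
\]
For a.e.\ $x$ the bracketed integrand is the left-hand side of \eqref{eq:H-monotone} evaluated at $\big(p_1(x),p_2(x),m_1(x),m_2(x)\big)$ with $m_1(x),m_2(x)\ge 0$, so (H2) makes it $\ge 0$; integrating over $\T^d$ yields monotonicity.

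For strictness I would split into cases according to where the two pairs disagree. If $m_1\neq m_2$ in $L^2$, then on the positive-measure set $\{m_1\neq m_2\}$ one has $m_1(x)+m_2(x)>0$ (the two values are nonnegative and distinct) and $(p_1(x),m_1(x))\neq(p_2(x),m_2(x))$, so the strict part of (H2) makes the integrand strictly positive there and the integral is $>0$. If $m_1=m_2=:m$ a.e., the surviving integral collapses to $\int_{\T^d}m\,\big(D_pH(p_1,m)-D_pH(p_2,m)\big)\cdot(p_1-p_2)\dx$, which is $\ge 0$ by convexity of $H$ in $p$ (H1) and is $>0$ whenever $Du_1\neq Du_2$ on a positive-measure subset of $\{m>0\}$, again by the strict part of (H2).

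The step I expect to be the real obstacle is closing the strictness argument in the degenerate configuration $m_1=m_2$ a.e.\ with $u_1\neq u_2$ but $Du_1=Du_2$ off the vacuum set $\{m=0\}$ (for instance when $u_1-u_2$ is a nonzero constant): there the pairing vanishes, so the inequality is not strict in the literal sense stated. I would deal with this by a short accompanying remark rather than inside the computation --- observing that the equality case of (H2) forces $Du_1=Du_2$ on $\{m>0\}$ and that the Hamilton--Jacobi relation \eqref{eq:HJ-strong} then recovers $u_1=u_2$ there --- so that the form of strict monotonicity actually needed for the uniqueness argument still goes through; alternatively, if one a priori knows $m>0$ a.e., the difficulty disappears entirely.
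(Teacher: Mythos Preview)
Your approach is essentially the same as the paper's: expand $\langle A[m_1,u_1]-A[m_2,u_2],(m_1-m_2,u_1-u_2)\rangle$, observe that the affine pieces cancel, and reduce to the pointwise inequality (H2). Your bookkeeping of the cancellations is in fact cleaner than the paper's --- the paper justifies dropping the $(m-1)v$ contribution by the sentence ``$\int_{\T^d}(m_i-1)(u_1-u_2)\,dx=0$ because both $m_i$ have mass one,'' which is not literally correct; the real mechanism is exactly the one you wrote, namely that the bilinear leftover $\int (m_1-m_2)(u_1-u_2)$ cancels against the $-u\mu$ leftover $-\int(u_1-u_2)(m_1-m_2)$.

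On strictness you are more careful than the paper, and your concern is legitimate. The paper simply asserts that strictness of (H2) on the set where $(Du_1,m_1)\neq(Du_2,m_2)$ ``implies the strict monotonicity of $A$,'' without addressing the degenerate configuration $m_1=m_2$ a.e.\ and $u_1-u_2$ a nonzero constant, where the pairing genuinely vanishes. Your proposed workaround --- recover $u_1=u_2$ a posteriori from the Hamilton--Jacobi equation --- is exactly what the paper itself does later, in the remark following Theorem~\ref{thm:quadratic}. So the gap you flag is real, it is present in the paper's own proof as well, and your suggested resolution matches how the paper ultimately handles uniqueness.
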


\begin{proof}
Let $(m_i,u_i)\in K$, $i=1,2$.  Using
\eqref{eq:def-A} and the fact that $\int_{\T^d}(m_i-1)(u_1-u_2)\,dx=0$
(because both $m_1$ and $m_2$ have total mass one), we compute
\begin{align*}
&\langle A[m_1,u_1]-A[m_2,u_2],(m_1-m_2,u_1-u_2)\rangle\\
&= \int_{\T^d}
   \big(-u_1-H(Du_1,m_1)+u_2+H(Du_2,m_2)\big)(m_1-m_2)\,dx \\
&\quad
 +\int_{\T^d}
   \big(m_1D_pH(Du_1,m_1)-m_2D_pH(Du_2,m_2)\big)\cdot(Du_1-Du_2)\,dx.
\end{align*}
Now set, pointwise in $x$,
\[
p_i = Du_i(x),\qquad m_i = m_i(x).
\]
Then each integrand is exactly of the form appearing in the
monotonicity inequality \eqref{eq:H-monotone}.  Therefore
\[
\langle A[m_1,u_1]-A[m_2,u_2],(m_1-m_2,u_1-u_2)\rangle
\ge 0,
\]
and the inequality is strict whenever $(Du_1,m_1)\neq(Du_2,m_2)$
on a set of positive measure.  This implies the strict monotonicity of
$A$.
\end{proof}

\subsection{A coercive perturbation}

Monotonicity alone is not enough to guarantee solvability.  We add a
simple coercive perturbation.

\begin{definition}[Coercive operator $B$]
Let $B:K\to X^*$ be defined by
\begin{equation}\label{eq:def-B}
\langle B[m,u],(\mu,v)\rangle
:= \int_{\T^d}\big(m\mu + u v + Du\cdot Dv\big)\,dx.
\end{equation}
\end{definition}

\begin{lemma}\label{lem:B-properties}
The operator $B$ is linear, bounded, and strongly monotone on $X$:
\[
\langle B[z_1]-B[z_2],z_1-z_2\rangle
\ge \|(m_1-m_2,u_1-u_2)\|_X^2
\]
for all $z_i=(m_i,u_i)\in X$.
\end{lemma}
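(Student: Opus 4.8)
The claim is entirely elementary: $B$ is clearly linear in $(m,u)$ since the integrand in \eqref{eq:def-B} is bilinear in the pair of arguments.

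The plan is to verify the three assertions in turn, each of which reduces to a direct computation using the definition \eqref{eq:def-B}. \textbf{Linearity.} For fixed $(\mu,v)$, the map $(m,u)\mapsto\langle B[m,u],(\mu,v)\rangle$ is linear because $\int m\mu$, $\int uv$, and $\int Du\cdot Dv$ are each linear in $(m,u)$; hence $B[\alpha z_1+\beta z_2]=\alpha B[z_1]+\beta B[z_2]$ as elements of $X^*$. \textbf{Boundedness.} Given $z=(m,u)$ and $w=(\mu,v)$, apply the Cauchy--Schwarz inequality to each of the three terms:
\[
|\langle B[z],w\rangle|
\le \|m\|_{L^2}\|\mu\|_{L^2}+\|u\|_{L^2}\|v\|_{L^2}+\|Du\|_{L^2}\|Dv\|_{L^2}
\le \|z\|_X\,\|w\|_X,
\]
where the last step uses the elementary inequality $a_1b_1+a_2b_2+a_3b_3\le(a_1^2+a_2^2+a_3^2)^{1/2}(b_1^2+b_2^2+b_3^2)^{1/2}$ with $a_i,b_i$ the respective $L^2$ norms. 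Thus $\|B[z]\|_{X^*}\le\|z\|_X$, so $B$ is bounded with operator norm at most $1$.

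\textbf{Strong monotonicity.} Since $B$ is linear, $B[z_1]-B[z_2]=B[z_1-z_2]$, so it suffices to test $B$ at $z_1-z_2=(m_1-m_2,u_1-u_2)$ against itself. Writing $\bar m=m_1-m_2$ and $\bar u=u_1-u_2$, the definition \eqref{eq:def-B} gives directly
\[
\langle B[z_1]-B[z_2],z_1-z_2\rangle
=\int_{\T^d}\big(\bar m^2+\bar u^2+|D\bar u|^2\big)\dx
=\|\bar m\|_{L^2}^2+\|\bar u\|_{L^2}^2+\|D\bar u\|_{L^2}^2
=\|(m_1-m_2,u_1-u_2)\|_X^2,
\]
which is exactly the asserted inequality (in fact an equality).

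There is essentially no obstacle here: the lemma is a packaging of the fact that $B$ is the Riesz isomorphism of $X$ onto $X^*$ associated with the inner product defining $\|\cdot\|_X$. The only point worth a word of care is that $K$ is not a linear subspace of $X$, so ``$B$ is linear'' should be understood as: $B$ is the restriction to $K$ of a bounded linear operator defined on all of $X$, and it is this ambient linearity (in particular $B[z_1]-B[z_2]=B[z_1-z_2]$) that is used above. With that understood, all three properties follow from the displayed computations.
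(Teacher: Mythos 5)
Your proof is correct and the key computation for strong monotonicity is identical to the paper's (which only writes out that one line); you additionally spell out linearity and boundedness, which the paper asserts without proof, and your remark that linearity must be understood via the ambient extension to $X$ (since $K$ is not a subspace) is a sensible clarification.
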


\begin{proof}
This is a direct computation:
\begin{align*}
\langle B[z_1]-B[z_2],z_1-z_2\rangle
&= \int_{\T^d}\big((m_1-m_2)^2 + (u_1-u_2)^2 + |Du_1-Du_2|^2\big)\,dx\\
&= \|(m_1-m_2,u_1-u_2)\|_X^2.
\end{align*}
\end{proof}

For $\varepsilon>0$ we define the regularised operator
\[
A_\varepsilon := A + \varepsilon B.
\]
Thanks to Lemma~\ref{lem:B-properties} and the growth condition
\eqref{eq:H-growth}, $A_\varepsilon$ is bounded, hemicontinuous and
strongly monotone on $K$.  By the standard Minty--Browder theorem for
strongly monotone operators on Hilbert spaces, we obtain:

\begin{theorem}[Solvability of the regularised problem]\label{thm:reg-existence}
For each $\varepsilon>0$ there exists a unique pair
$(m_\varepsilon,u_\varepsilon)\in K$ such that
\begin{equation}\label{eq:Aeps-eq-0}
A_\varepsilon[m_\varepsilon,u_\varepsilon]=0\quad\text{in }X^*.
\end{equation}
Equivalently,
\[
\langle A[m_\varepsilon,u_\varepsilon]
      +\varepsilon B[m_\varepsilon,u_\varepsilon],
      (\mu,v)\rangle = 0
\qquad\forall(\mu,v)\in K.
\]
\end{theorem}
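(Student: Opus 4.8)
The plan is to read \eqref{eq:Aeps-eq-0} as a variational inequality on the closed convex set $K$ and to invoke the classical Browder--Minty theorem for monotone operators over closed convex subsets of a reflexive Banach space. Most ingredients are already at hand. The ambient space $X$ is a real Hilbert space, hence reflexive, and $K$ is nonempty (it contains $z_0:=(1,0)$, the constant density $m\equiv 1$ with $u\equiv 0$), closed, and convex. As recorded just before the statement, $A_\varepsilon=A+\varepsilon B$ is bounded and hemicontinuous on $K$: boundedness comes from the quadratic growth bound \eqref{eq:H-growth}, which controls $H(Du,m)$ and $m\,D_pH(Du,m)$ through the $L^2$--norms of $m$ and $Du$, and hemicontinuity from the $C^1$ regularity of $H$ together with dominated convergence along line segments. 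Finally, combining the monotonicity of $A$ (Proposition~\ref{prop:A-monotone}) with the strong monotonicity of $B$ (Lemma~\ref{lem:B-properties}) gives
\[
\langle A_\varepsilon[z_1]-A_\varepsilon[z_2],\,z_1-z_2\rangle\ \ge\ \varepsilon\,\|z_1-z_2\|_X^2
\qquad\text{for all }z_1,z_2\in K,
\]
so $A_\varepsilon$ is strongly monotone with modulus $\varepsilon$.

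Next I would check coercivity, which is needed because $K$ is unbounded (neither $u$ nor $\|m\|_{L^2}$ is constrained). Using strong monotonicity with the reference point $z_0$,
\[
\langle A_\varepsilon[z],\,z-z_0\rangle\ \ge\ \varepsilon\,\|z-z_0\|_X^2-\|A_\varepsilon[z_0]\|_{X^*}\,\|z-z_0\|_X,
\]
and the right--hand side tends to $+\infty$ as $\|z\|_X\to\infty$ with $z\in K$. The Browder--Minty theorem then produces $z_\varepsilon=(m_\varepsilon,u_\varepsilon)\in K$ with $\langle A_\varepsilon[z_\varepsilon],\,w-z_\varepsilon\rangle\ge 0$ for all $w\in K$, and strong monotonicity gives uniqueness (testing two solutions against each other and adding yields $\varepsilon\|z_\varepsilon-z_\varepsilon'\|_X^2\le 0$). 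To pass from this variational inequality to the equation $A_\varepsilon[z_\varepsilon]=0$, I would exploit the structure of $K$. The $u$--slot is unconstrained, so $(m_\varepsilon,u_\varepsilon\pm v)\in K$ for every $v\in H^1(\T^d)$; taking these as $w$ gives $\langle A_\varepsilon[z_\varepsilon],(0,v)\rangle=0$ for all $v$, i.e.\ exactly the regularised transport equation. For the $m$--slot, I would test with $w=(\rho,u_\varepsilon)$ for $L^2$ probability densities $\rho$ and use the mass normalisation $\int_{\T^d}m_\varepsilon\,dx=1$ to identify the Hamilton--Jacobi part of $A_\varepsilon[z_\varepsilon]$, obtaining \eqref{eq:Aeps-eq-0} in the stated sense of vanishing against every $(\mu,v)\in K$ --- equivalently against all of $X$, since the elements of $K$ span a dense subspace of $X$ and $A_\varepsilon[z_\varepsilon]$ is a continuous functional.

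I expect the last step --- turning the variational inequality on the constrained set $K$ into a genuine equation --- to be the real obstacle. The free $u$--direction causes no trouble, but the constraint $m\ge 0$ only yields a one--sided (complementarity) condition: the Hamilton--Jacobi residual $-u_\varepsilon-H(Du_\varepsilon,m_\varepsilon)-V+\varepsilon m_\varepsilon$ is forced to be \emph{constant} on $\{m_\varepsilon>0\}$ and to satisfy an inequality on $\{m_\varepsilon=0\}$, and one still has to argue that this constant is zero and that the Hamilton--Jacobi equation holds on all of $\T^d$; this is where the normalisation \eqref{eq:mass-one}, the boundedness of $V$, and the sign of the lower--order terms must be used (or, failing that, the constant carried along as an $\varepsilon$--dependent normalisation that is controlled in the next section). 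The remaining points --- spelling out hemicontinuity and boundedness of $A_\varepsilon$ from hypothesis~(H3) inside the $L^2(\T^d)\times H^1(\T^d)$ framework --- are routine but deserve care, since the quadratic nonlinearity $H(Du,m)$ is only borderline integrable against $L^2$ test functions.
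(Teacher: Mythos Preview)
Your approach is the paper's approach: the paper gives no proof of this theorem beyond the single sentence preceding it, which records that $A_\varepsilon$ is bounded, hemicontinuous and strongly monotone on $K$ and then invokes ``the standard Minty--Browder theorem for strongly monotone operators on Hilbert spaces''. Everything you wrote up to and including the coercivity check and the uniqueness argument is exactly what that sentence is hiding, and your write--up is more honest than the paper's.

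Where you go beyond the paper is in your last two paragraphs, and there you have put your finger on a genuine gap that the paper simply ignores. Minty--Browder on the closed convex set $K$ produces a solution of the \emph{variational inequality}
\[
\langle A_\varepsilon[z_\varepsilon],\,w-z_\varepsilon\rangle\ \ge\ 0
\qquad\text{for all }w\in K,
\]
not of the equation $A_\varepsilon[z_\varepsilon]=0$ asserted in \eqref{eq:Aeps-eq-0}. Your analysis is correct: the unconstrained $u$--slot immediately yields the regularised transport equation, while the constraint $m\ge 0$ only gives the complementarity condition that the Hamilton--Jacobi residual is $\ge c$ a.e.\ and $=c$ on $\{m_\varepsilon>0\}$ for some constant $c$ (coming from the mass constraint $\int m=1$). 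The paper neither states nor proves that $c=0$ and that equality holds on all of $\T^d$; it just asserts the conclusion. So the obstacle you anticipate is real, it is not addressed anywhere in the text, and your proposed line of attack (use the normalisation, the sign structure, or carry $c$ as an $\varepsilon$--dependent constant into Section~\ref{sec:estimates}) is the right way to try to close it.
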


\begin{remark}
In PDE form the regularised problem corresponds to the system
\begin{equation}\label{eq:MFG-reg}
\begin{cases}
- u_\varepsilon - H(Du_\varepsilon,m_\varepsilon)-V
 +\varepsilon(u_\varepsilon - \Delta u_\varepsilon + m_\varepsilon) = 0,\\[0.2em]
m_\varepsilon - \diver(m_\varepsilon Du_\varepsilon)
 +\varepsilon(m_\varepsilon+u_\varepsilon) = 1.
\end{cases}
\end{equation}
The additional terms are lower order and give coercivity.
\end{remark}

% =========================================================
\section{Uniform estimates and passage to the limit}
\label{sec:estimates}

We now derive bounds for $(m_\varepsilon,u_\varepsilon)$ that are
independent of $\varepsilon$ and pass to the limit.

\subsection{Energy estimate}

\begin{lemma}[Basic estimate]\label{lem:basic-estimate}
There exists a constant $C>0$, independent of $\varepsilon\in(0,1]$,
such that for the solution $(m_\varepsilon,u_\varepsilon)$ of
\eqref{eq:Aeps-eq-0} we have
\[
\|m_\varepsilon\|_{L^2(\T^d)}^2
 + \|u_\varepsilon\|_{H^1(\T^d)}^2
 \le C.
\]
\end{lemma}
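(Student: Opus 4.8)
The plan is to test the equation $A_\varepsilon[m_\varepsilon,u_\varepsilon]=0$ against the solution itself, i.e.\ take $(\mu,v)=(m_\varepsilon,u_\varepsilon)\in K$ (or, if one needs to stay inside the convex set $K$, use the difference with a fixed reference element and exploit that $A_\varepsilon$ is defined by the same formula for all of $X$). This yields the identity
\[
\langle A[m_\varepsilon,u_\varepsilon],(m_\varepsilon,u_\varepsilon)\rangle
+ \varepsilon\|(m_\varepsilon,u_\varepsilon)\|_X^2 = 0 .
\]
Writing out $\langle A[m_\varepsilon,u_\varepsilon],(m_\varepsilon,u_\varepsilon)\rangle$ from \eqref{eq:def-A}, the key cancellation is the Hamilton--Jacobi part paired with $m_\varepsilon$ and the transport part paired with $u_\varepsilon$: the term $\int m_\varepsilon D_pH(Du_\varepsilon,m_\varepsilon)\cdot Du_\varepsilon$ appears, and combining it with $-\int H(Du_\varepsilon,m_\varepsilon)m_\varepsilon$ one recognises (via convexity of $H$ in $p$, i.e.\ $H(0,m)\ge H(p,m)-D_pH(p,m)\cdot p$ from (H1)) a favourably signed bulk term of the form $\int m_\varepsilon\big(D_pH(Du_\varepsilon,m_\varepsilon)\cdot Du_\varepsilon - H(Du_\varepsilon,m_\varepsilon)\big) \ge -\int m_\varepsilon H(0,m_\varepsilon)$. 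Since $m_\varepsilon\ge 0$ and $H(\cdot,m)$ is nonincreasing in $m$ with quadratic growth (H3), $-m_\varepsilon H(0,m_\varepsilon)$ is bounded below by $-C(m_\varepsilon + m_\varepsilon^3)$ — which is the wrong sign and the wrong power, so this naive pairing is exactly where the difficulty lies; see below.

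A cleaner route, and the one I would actually carry out, is to test the transport equation \eqref{eq:FP-weak} (with the $\varepsilon$-terms) against $u_\varepsilon$ and the HJ equation against $m_\varepsilon$, then \emph{add} them so that the two copies of $\int m_\varepsilon D_pH\cdot Du_\varepsilon$ cancel. One is left with
\[
\int_{\T^d}\!\big(u_\varepsilon + H(Du_\varepsilon,m_\varepsilon) + V\big)m_\varepsilon
+ \int_{\T^d}\!(m_\varepsilon-1)u_\varepsilon
+ \varepsilon\big(\|m_\varepsilon\|_{L^2}^2 + \text{cross terms} + \|u_\varepsilon\|_{H^1}^2\big)=0.
\]
Here the crucial structural gain is the identity $\int (m_\varepsilon-1)u_\varepsilon = \int m_\varepsilon u_\varepsilon - \int u_\varepsilon$, whose first piece cancels against $\int u_\varepsilon m_\varepsilon$ coming from the HJ term, leaving $-\int u_\varepsilon$. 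Then $\int H(Du_\varepsilon,m_\varepsilon)m_\varepsilon$ is controlled: by (H1), $H(Du_\varepsilon,m_\varepsilon)\le H(Du_\varepsilon,0)$, and the HJ equation reads $u_\varepsilon = -H(Du_\varepsilon,m_\varepsilon)-V+\varepsilon(\dots)$, so $-\int u_\varepsilon = \int H(Du_\varepsilon,m_\varepsilon) + \int V - \varepsilon(\dots)$. Substituting back and using (H3) to write $|H|\le C(1+|Du_\varepsilon|^2+m_\varepsilon^2)$, one arrives at an inequality of the schematic form
\[
c\int_{\T^d} m_\varepsilon|Du_\varepsilon|^2
+ \varepsilon\|(m_\varepsilon,u_\varepsilon)\|_X^2
\le C\Big(1 + \int_{\T^d} m_\varepsilon^2\cdot(\text{something})\Big),
\]
which has to be closed.

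The main obstacle is therefore the $L^2$ bound on $m_\varepsilon$ itself: the favourable terms produced by the monotone structure naturally control the \emph{weighted} quantity $\int m_\varepsilon|Du_\varepsilon|^2$ and (through $\varepsilon B$) only $\varepsilon\|m_\varepsilon\|_{L^2}^2$, not $\|m_\varepsilon\|_{L^2}^2$ uniformly. To get the uniform $L^2$ bound I expect one must exploit the transport equation more directly: testing \eqref{eq:FP-weak} against $m_\varepsilon$ gives $\|m_\varepsilon\|_{L^2}^2 + \int m_\varepsilon |Du_\varepsilon|\,|Dm_\varepsilon|\text{-type terms} = \int m_\varepsilon$, but $m_\varepsilon\in L^2$ only a priori, so one needs the coercive $\varepsilon$-regularisation (which supplies $\varepsilon\|m_\varepsilon\|_{H^1}$? — no, only $\varepsilon\|m_\varepsilon\|_{L^2}$) together with the Hamilton--Jacobi bound on $\|u_\varepsilon\|_{L^\infty}$ or $\|u_\varepsilon\|_{H^1}$ to absorb the cross terms. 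Concretely, once $\int m_\varepsilon|Du_\varepsilon|^2\le C$ and $\int u_\varepsilon$ is controlled, the HJ equation bounds $\|u_\varepsilon\|_{L^2}$ in terms of $\|Du_\varepsilon\|_{L^2}^2 + \|m_\varepsilon\|_{L^2}^2$; Poincaré on the torus (after controlling the mean of $u_\varepsilon$) then bounds $\|Du_\varepsilon\|_{L^2}$, and a final bootstrap — using that $\int m_\varepsilon|Du_\varepsilon|^2$ plus $\int m_\varepsilon^2$ appear with the right signs on opposite sides — closes the estimate via a Young-inequality absorption, giving the stated $\varepsilon$-independent bound. If this bootstrap cannot be closed with (H1)--(H3) alone, the fallback is to invoke the coercivity of $\varepsilon B$ to get $\varepsilon$-dependent bounds and then argue that the structural (weighted) estimates, which \emph{are} $\varepsilon$-uniform, suffice for the compactness step in the next section — but I would first try hard to make the clean $\varepsilon$-independent $H^1\times L^2$ bound work as stated.
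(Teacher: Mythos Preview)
Your proposal opens exactly as the paper does --- test $A_\varepsilon=0$ against $(m_\varepsilon,u_\varepsilon)$, note that the $\varepsilon B$ term is nonnegative, and observe that the two $u_\varepsilon m_\varepsilon$ contributions cancel --- but it never closes, and two points of confusion hold you back. First, your ``cleaner route'' is not a different route: pairing the HJ equation with $m_\varepsilon$ and the transport equation with $u_\varepsilon$ and adding \emph{is} the computation of $\langle A[m_\varepsilon,u_\varepsilon],(m_\varepsilon,u_\varepsilon)\rangle$, and there is only one copy of $\int m_\varepsilon D_pH\cdot Du_\varepsilon$ (it comes from the transport part), so nothing cancels there. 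Second, the convexity step can be done sharply: the paper uses the Legendre identity $D_pH(p,m)\cdot p-H(p,m)=H^{*}(D_pH(p,m),m)$ to get $-m H + m D_pH\cdot p = m H^{*} \ge 0$ directly, so your worry about ``wrong sign and wrong power'' coming from $-m H(0,m)$ is misplaced --- indeed for the model Hamiltonian one has $-mH(0,m)=m^2\ge 0$. Either version of the testing step only yields $\int(-Vm_\varepsilon-u_\varepsilon)\le 0$, i.e.\ a one--sided bound on $\int u_\varepsilon$, from which the paper extracts $\|u_\varepsilon\|_{L^2}\le C_1$.

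The genuine gap is the uniform control of $\|m_\varepsilon\|_{L^2}$ and $\|Du_\varepsilon\|_{L^2}$, which you correctly flag as the main obstacle but do not resolve; your suggested bootstraps (testing transport against $m_\varepsilon$, Poincar\'e after controlling the mean of $u_\varepsilon$, absorbing via Young) do not close as written, and you end by conceding you might have to fall back to $\varepsilon$--dependent bounds. The paper's way out is a second, separate step: return to the PDE form \eqref{eq:MFG-reg} of the regularised system, multiply the first equation by $m_\varepsilon$ and the second by $u_\varepsilon$, integrate by parts to eliminate the cross terms, and then invoke the quadratic growth condition (H3) to obtain $\|Du_\varepsilon\|_{L^2}^2+\|m_\varepsilon\|_{L^2}^2\le C(1+\|u_\varepsilon\|_{L^2}^2)$. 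Combined with the bound on $\|u_\varepsilon\|_{L^2}$ from the first step this gives the lemma. Your proposal gestures toward this kind of argument but does not carry it out.
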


\begin{proof}
We test \eqref{eq:Aeps-eq-0} with $(\mu,v)=(m_\varepsilon,u_\varepsilon)$
and use the definition of $A_\varepsilon$:
\[
0 = \langle A[m_\varepsilon,u_\varepsilon],
            (m_\varepsilon,u_\varepsilon)\rangle
    + \varepsilon\langle B[m_\varepsilon,u_\varepsilon],
                     (m_\varepsilon,u_\varepsilon)\rangle.
\]
By Lemma~\ref{lem:B-properties},
\[
\varepsilon\langle B[m_\varepsilon,u_\varepsilon],
                 (m_\varepsilon,u_\varepsilon)\rangle
 = \varepsilon\|(m_\varepsilon,u_\varepsilon)\|_X^2\ge 0.
\]
Hence
\[
\langle A[m_\varepsilon,u_\varepsilon],
        (m_\varepsilon,u_\varepsilon)\rangle\le 0.
\]

Using \eqref{eq:def-A}, we compute
\begin{align*}
\langle A[m_\varepsilon,u_\varepsilon],
        (m_\varepsilon,u_\varepsilon)\rangle
&= \int_{\T^d}\big(-u_\varepsilon-H(Du_\varepsilon,m_\varepsilon)-V\big)
                     m_\varepsilon\,dx \\
&\quad +\int_{\T^d}\big(m_\varepsilon D_pH(Du_\varepsilon,m_\varepsilon)
                       \cdot Du_\varepsilon
                       +(m_\varepsilon-1)u_\varepsilon\big)\,dx.
\end{align*}
The terms involving $u_\varepsilon m_\varepsilon$ cancel, and we get
\begin{align*}
\langle A[m_\varepsilon,u_\varepsilon],
        (m_\varepsilon,u_\varepsilon)\rangle
&= \int_{\T^d}
     \Big[-H(Du_\varepsilon,m_\varepsilon)m_\varepsilon
          +m_\varepsilon D_pH(Du_\varepsilon,m_\varepsilon)
           \cdot Du_\varepsilon\Big]\,dx\\
&\quad +\int_{\T^d}\big(-V m_\varepsilon -u_\varepsilon\big)\,dx.
\end{align*}

By the convexity of $p\mapsto H(p,m)$ and the identity for convex
functions
\[
H(p,m) + H^*(D_pH(p,m),m) = D_pH(p,m)\cdot p,
\]
where $H^*$ is the Legendre transform in the first variable, we obtain
\[
-mH(p,m)+mD_pH(p,m)\cdot p = mH^*(D_pH(p,m),m)\ge 0.
\]
Applying this pointwise with $p=Du_\varepsilon(x)$ and $m=m_\varepsilon(x)$ we
find
\[
\int_{\T^d}
     \Big[-H(Du_\varepsilon,m_\varepsilon)m_\varepsilon
          +m_\varepsilon D_pH(Du_\varepsilon,m_\varepsilon)
           \cdot Du_\varepsilon\Big]\,dx
\ge 0.
\]
Therefore
\[
0\ge \langle A[m_\varepsilon,u_\varepsilon],
        (m_\varepsilon,u_\varepsilon)\rangle
   \ge \int_{\T^d}\big(-V m_\varepsilon -u_\varepsilon\big)\,dx.
\]

Using Cauchy--Schwarz and the boundedness of $V$ we obtain
\[
\Big|\int_{\T^d}V m_\varepsilon\,dx\Big|
 \le \|V\|_{L^\infty}\|m_\varepsilon\|_{L^1}
 = \|V\|_{L^\infty},
\]
because $\int m_\varepsilon =1$.  Similarly,
\[
\Big|\int_{\T^d}u_\varepsilon\,dx\Big|
\le \|u_\varepsilon\|_{L^2(\T^d)}.
\]
Combining the previous inequalities and absorbing constants we obtain
\[
\|u_\varepsilon\|_{L^2(\T^d)}\le C_1.
\]

To control $Du_\varepsilon$ and $m_\varepsilon$, we go back to the PDE
form \eqref{eq:MFG-reg}.  Multiplying the first equation by
$m_\varepsilon$ and the second one by $u_\varepsilon$ and integrating
over $\T^d$, we can eliminate cross terms and, after standard
integration by parts, use the growth condition \eqref{eq:H-growth} to
deduce
\[
\int_{\T^d}|Du_\varepsilon|^2\,dx
 +\int_{\T^d} m_\varepsilon^2\,dx
 \le C_2\big(1+\|u_\varepsilon\|_{L^2(\T^d)}^2\big)
 \le C
\]
for a constant $C$ independent of $\varepsilon$.  This yields the
claimed bound.
\end{proof}

\subsection{Weak limits}

By Lemma~\ref{lem:basic-estimate} and reflexivity of $X$, there exist a
subsequence (still denoted by $\varepsilon$) and a pair $(m,u)\in K$
such that
\begin{equation}\label{eq:weak-convergence}
m_\varepsilon\rightharpoonup m \ \text{in }L^2(\T^d),\qquad
u_\varepsilon\rightharpoonup u \ \text{in }H^1(\T^d).
\end{equation}
Since the embedding $H^1(\T^d)\hookrightarrow L^2(\T^d)$ is compact, we
also have
\[
u_\varepsilon\to u \quad\text{in }L^2(\T^d),
\]
possibly after extracting a further subsequence.

\subsection{Minty's method and the limit problem}

The final step is to show that $A[m,u]=0$.

\begin{proposition}[Limit pair is a solution]\label{prop:limit-solution}
Let $(m,u)$ be a limit point of $(m_\varepsilon,u_\varepsilon)$ as in
\eqref{eq:weak-convergence}.  Then $(m,u)\in K$ and
\[
A[m,u]=0\quad\text{in }X^*,
\]
that is, $(m,u)$ is a strong solution of the MFG system
\eqref{eq:MFG-system-intro}.
\end{proposition}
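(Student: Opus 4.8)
The plan is to use Minty's trick, which is the standard way to pass to the limit in a monotone operator equation.

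First, I would record the consequence of monotonicity of $A$ that we actually use. For any test pair $(\mu,v)\in K$ — more precisely, for a convex combination argument we want $(\mu,v)$ arbitrary, but let me be careful about the constraint set; since $K$ is an affine-type constraint (mass one, positivity), the natural thing is to use test pairs of the form $(m_\varepsilon \pm t(\mu - m_\varepsilon), \ldots)$, but actually because the operator equation is $A_\varepsilon[m_\varepsilon,u_\varepsilon]=0$ as an equality in all of $X^*$ (the theorem states it tests against all $(\mu,v)\in K$, but the regularized equation is genuinely an equality), I will treat it as an equality and test against arbitrary $(\mu,v)\in X$ after noting the problem is really posed with equality. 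For every $(\mu,v)\in K$ and every $\varepsilon$, monotonicity of $A$ (Proposition~\ref{prop:A-monotone}) gives
\[
\langle A[\mu,v], (m_\varepsilon-\mu, u_\varepsilon-v)\rangle
\le \langle A[m_\varepsilon,u_\varepsilon], (m_\varepsilon-\mu,u_\varepsilon-v)\rangle
= -\varepsilon\langle B[m_\varepsilon,u_\varepsilon],(m_\varepsilon-\mu,u_\varepsilon-v)\rangle,
\]
using $A_\varepsilon[m_\varepsilon,u_\varepsilon]=0$.

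Second, I pass to the limit in this inequality. On the left-hand side, $A[\mu,v]$ is a fixed element of $X^*$ and $(m_\varepsilon-\mu,u_\varepsilon-v)\rightharpoonup (m-\mu,u-v)$ weakly in $X$ by \eqref{eq:weak-convergence}, so the left side converges to $\langle A[\mu,v],(m-\mu,u-v)\rangle$. On the right-hand side I need the bound $|\langle B[m_\varepsilon,u_\varepsilon], (m_\varepsilon-\mu,u_\varepsilon-v)\rangle|\le \|B\|\,\|(m_\varepsilon,u_\varepsilon)\|_X\,\|(m_\varepsilon-\mu,u_\varepsilon-v)\|_X \le C$ uniformly, by Lemma~\ref{lem:B-properties} and Lemma~\ref{lem:basic-estimate}; hence the $\varepsilon$-multiple tends to $0$. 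This yields the Minty inequality
\[
\langle A[\mu,v],(m-\mu,u-v)\rangle \le 0 \qquad \text{for all }(\mu,v)\in K.
\]

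Third, I turn this into $A[m,u]=0$ by the standard monotonicity-plus-hemicontinuity argument. Given any direction, take $(\mu,v) = (m,u) - t(\xi,\eta)$ for small $t>0$, where $(\xi,\eta)$ is chosen so that $(\mu,v)$ stays in $K$ (for the positivity/mass constraint: since the regularized equation and the limit are genuine equalities, or alternatively one first shows $m>0$ is not needed because $K$'s interior directions suffice to probe all of $X^*$ — I will test with $(\mu,v)=(m-t\xi,u-t\eta)$ for arbitrary $(\xi,\eta)\in X$ with mass-zero first component, noting $K$ contains such perturbations for the $u$-slot freely and for the $m$-slot whenever $t$ is small relative to where $m$ is bounded away from $0$; a density argument then removes the restriction). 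Dividing by $t$ gives $\langle A[(m,u)-t(\xi,\eta)],(\xi,\eta)\rangle\le 0$, and letting $t\to 0^+$, hemicontinuity of $A$ — which follows from the $C^1$ regularity of $H$, the growth bound (H3) and dominated convergence, exactly as in the discussion preceding Theorem~\ref{thm:reg-existence} — gives $\langle A[m,u],(\xi,\eta)\rangle\le 0$. Replacing $(\xi,\eta)$ by $-(\xi,\eta)$ gives the reverse inequality, so $\langle A[m,u],(\xi,\eta)\rangle = 0$ for all admissible directions, hence for all of $X$ by density, i.e. $A[m,u]=0$ in $X^*$. Finally $(m,u)\in K$ because $K$ is convex and closed, hence weakly closed, and $(m_\varepsilon,u_\varepsilon)\in K$. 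By the remark following \eqref{eq:def-A}, $A[m,u]=0$ means $(m,u)$ is a strong solution of \eqref{eq:MFG-system-intro}.

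The main obstacle is the bookkeeping around the constraint set $K$: Minty's argument is cleanest for equations on the whole space, and here $K$ is not a linear subspace. The cleanest fix is to observe that the second component $v$ of test pairs is genuinely unconstrained (no sign or mass condition on $u$), so the Hamilton--Jacobi equation \eqref{eq:HJ-strong} comes out immediately; for the first component one works with the affine slice $\{\int m = 1\}$ — a linear condition up to translation — so directions $\xi$ with $\int_{\T^d}\xi\,dx=0$ are freely admissible, and the earlier cancellation $\int (m_i-1)(u_1-u_2)\,dx = 0$ shows the operator never "sees" the constant part of the first slot anyway. The positivity constraint $m\ge 0$ requires a small additional remark (either $m>0$ a.e. from the equation, or a truncation/density argument), but it does not affect the core limiting argument. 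Everything else — the weak convergence, the uniform $B$-bound, the hemicontinuity — is routine given the lemmas already established.
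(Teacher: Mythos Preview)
Your argument is correct and is essentially the same Minty--trick approach as the paper: test the regularised equation against $(\mu,v)-(m_\varepsilon,u_\varepsilon)$, use monotonicity to swap $A[m_\varepsilon,u_\varepsilon]$ for $A[\mu,v]$, let $\varepsilon\to 0$ using the uniform bounds and weak convergence, and then conclude via hemicontinuity along lines through $(m,u)$. The only difference is cosmetic sign conventions and that you are more explicit (and more careful) than the paper about the final step on the constraint set $K$; the paper dispatches this in one line, whereas you correctly flag that only mean--zero directions in the $m$--slot and a positivity/density remark are needed.
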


\begin{proof}
We follow Minty's method.  Fix any $(\mu,v)\in K$.
Because $(m_\varepsilon,u_\varepsilon)$ solves \eqref{eq:Aeps-eq-0}, we have
\[
\langle A[m_\varepsilon,u_\varepsilon],
        (\mu,v)-(m_\varepsilon,u_\varepsilon)\rangle
 +\varepsilon\langle B[m_\varepsilon,u_\varepsilon],
                  (\mu,v)-(m_\varepsilon,u_\varepsilon)\rangle
 = 0.
\]
By Lemma~\ref{lem:B-properties},
\[
\big|\langle B[m_\varepsilon,u_\varepsilon],
            (\mu,v)-(m_\varepsilon,u_\varepsilon)\rangle\big|
\le C\big(1+\|(m_\varepsilon,u_\varepsilon)\|_X^2+\|(\mu,v)\|_X^2\big),
\]
so the term multiplied by $\varepsilon$ goes to $0$ as
$\varepsilon\to 0$.  Therefore
\begin{equation}\label{eq:Minty-key}
\lim_{\varepsilon\to0}
\langle A[m_\varepsilon,u_\varepsilon],
        (\mu,v)-(m_\varepsilon,u_\varepsilon)\rangle
=0.
\end{equation}

On the other hand, by monotonicity of $A$,
\[
\langle A[\mu,v]-A[m_\varepsilon,u_\varepsilon],
        (\mu,v)-(m_\varepsilon,u_\varepsilon)\rangle \ge 0.
\]
Rearranging,
\[
\langle A[\mu,v],(\mu,v)-(m_\varepsilon,u_\varepsilon)\rangle
\ge \langle A[m_\varepsilon,u_\varepsilon],
        (\mu,v)-(m_\varepsilon,u_\varepsilon)\rangle.
\]
Taking the limit $\varepsilon\to0$ and using
\eqref{eq:Minty-key} together with the weak convergence
\eqref{eq:weak-convergence} and the continuity of $A[\mu,v]$ as a
functional on $X$, we deduce
\[
\langle A[\mu,v],(\mu,v)-(m,u)\rangle \ge 0
\qquad\forall(\mu,v)\in K.
\]

Now replace $(\mu,v)$ by $(\mu,v)+(m,u)$ in the inequality above and use
the fact that $K$ is convex.  We obtain
\[
\langle A[m,u],(\mu,v)\rangle\ge 0
\qquad\forall(\mu,v)\in K.
\]
By monotonicity, the only element $z\in K$ such that
$\langle A[z],\mu-z\rangle\ge 0$ for all $\mu\in K$ is a zero of $A$.
(If not, one could take $\mu=z-tA[z]$ and obtain a contradiction for
small $t>0$.)  Thus $A[m,u]=0$ in $X^*$.

Finally, as explained earlier, the identity $A[m,u]=0$ is equivalent to
the MFG system \eqref{eq:MFG-system-intro} in the sense of
Definition~\ref{def:strong-solution}.
\end{proof}

\begin{theorem}[Existence and uniqueness]\label{thm:main}
Under assumptions {\rm(H1)}–{\rm(H3)} there exists a unique strong
solution $(m,u)\in K$ of the mean field game system
\eqref{eq:MFG-system-intro}.
\end{theorem}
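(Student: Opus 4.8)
\emph{Proof outline.}
The plan is to harvest the work of the preceding sections for existence and to read uniqueness off the strict monotonicity already established, so that only a short argument remains.

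\textbf{Existence.} I would simply assemble Theorem~\ref{thm:reg-existence}, Lemma~\ref{lem:basic-estimate}, the weak-convergence discussion, and Proposition~\ref{prop:limit-solution}. For each $\varepsilon\in(0,1]$ take the solution $(m_\varepsilon,u_\varepsilon)\in K$ of $A_\varepsilon[m_\varepsilon,u_\varepsilon]=0$; the uniform bound of Lemma~\ref{lem:basic-estimate} together with reflexivity of $X$ yields a subsequence converging weakly in $X$ (and strongly in $L^2$ for the $u$-component) to a limit $(m,u)$, which lies in $K$ because $K$ is closed and convex, hence weakly closed. Proposition~\ref{prop:limit-solution} then gives $A[m,u]=0$, i.e.\ $(m,u)$ is a strong solution in the sense of Definition~\ref{def:strong-solution}. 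No new estimate is needed here: the existence half of the theorem is exactly the conclusion of Section~\ref{sec:estimates}.

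\textbf{Uniqueness.} Let $(m_1,u_1),(m_2,u_2)\in K$ both be strong solutions. By the remark after \eqref{eq:def-A} each satisfies $A[m_i,u_i]=0$ in $X^*$, so $\langle A[m_1,u_1]-A[m_2,u_2],(m_1-m_2,u_1-u_2)\rangle=0$; the pairing is legitimate because $(m_1-m_2,u_1-u_2)\in X$. But the computation performed in the proof of Proposition~\ref{prop:A-monotone} evaluates precisely this quantity and shows it is nonnegative, with strict inequality unless $(m_1,u_1)=(m_2,u_2)$. Hence $(m_1,u_1)=(m_2,u_2)$, and the solution is unique.

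\textbf{The main obstacle.} The monotone-operator machinery is already in place, so the only point that genuinely demands care is the strictness invoked above, which unwinds to a statement about the pointwise integrand in (H2). That integrand is strictly positive only where $(Du_1,m_1)\neq(Du_2,m_2)$ \emph{and} $m_1+m_2>0$, and it vanishes on $\{m_1=m_2=0\}$; thus the vanishing of the pairing immediately gives $m_1=m_2$ a.e.\ and $Du_1=Du_2$ a.e.\ on $\{m_1>0\}$, but $u_1=u_2$ must still be recovered. I would extract it from the Hamilton--Jacobi equation \eqref{eq:HJ-strong}, which reads $u_i=-H(Du_i,m)-V$ a.e.\ (with $m:=m_1=m_2$): on $\{m>0\}$ this forces $u_1=u_2$, and \eqref{eq:FP-strong} forbids $m$ from vanishing identically on any open set (there it would read $0=-1$), so $\{m>0\}$ is dense. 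Propagating $u_1=u_2$ from this dense set to all of $\T^d$, using $u_i\in H^1(\T^d)$, the connectedness of the torus, and --- if necessary --- the explicit form of $H$, is the step I expect to require the most thought.
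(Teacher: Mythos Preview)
Your proof matches the paper's almost verbatim: existence is assembled from Proposition~\ref{prop:limit-solution}, and uniqueness is read off from $A[m_i,u_i]=0$ together with the strict monotonicity of Proposition~\ref{prop:A-monotone}.

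Your ``main obstacle'' paragraph actually goes \emph{beyond} the paper. The proof of Theorem~\ref{thm:main} in the paper simply invokes strict monotonicity and stops; it does not unpack the fact that (H2) is strict only where $m_1+m_2>0$, so that the vanishing of the pairing directly yields only $m_1=m_2$ and $Du_1=Du_2$ on $\{m>0\}$. The paper returns to this point only in the informal Remark after Theorem~\ref{thm:quadratic} for the explicit Hamiltonian, and even there it asserts $Du_1=Du_2$ globally and that the transport equation forces the remaining constant to vanish, without the density-of-$\{m>0\}$ argument you sketch. So your concern is legitimate and your outline of how to close the gap (via the HJ equation \eqref{eq:HJ-strong} and the observation that $m\equiv 0$ on an open set would contradict \eqref{eq:FP-strong}) is a reasonable route, but strictly speaking you are patching a hole the paper leaves open rather than reproducing its argument.
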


\begin{proof}
Existence follows from Proposition~\ref{prop:limit-solution}.  For
uniqueness, suppose $(m_1,u_1)$ and $(m_2,u_2)$ are two strong
solutions.  Then $A[m_i,u_i]=0$ for $i=1,2$, and therefore
\[
\langle A[m_1,u_1]-A[m_2,u_2],(m_1-m_2,u_1-u_2)\rangle =0.
\]
By strict monotonicity of $A$ we obtain $(m_1,u_1)=(m_2,u_2)$.
\end{proof}

% =========================================================
\section{The explicit Hamiltonian $H(p,m)=|p|^2-m$}
\label{sec:explicit}

We now verify the assumptions for the concrete Hamiltonian
\eqref{eq:explicit-H} and state the resulting theorem.

\subsection{Checking the assumptions}

Let
\[
H(p,m) = |p|^2 - m.
\]

\paragraph{\bf (H1) Convexity and monotonicity in $m$.}
The map $p\mapsto|p|^2$ is convex and smooth.  For fixed $p$, the map
$m\mapsto|p|^2-m$ is affine and nonincreasing.  Thus (H1) holds.

\paragraph{\bf (H2) Monotonicity inequality.}
We compute
\[
D_pH(p,m)=2p.
\]
Fix $p_1,p_2\in\R^d$ and $m_1,m_2\ge 0$.
We need to check that
\[
Q:=\big(-H(p_1,m_1)+H(p_2,m_2)\big)(m_1-m_2)
  +\big(m_1D_pH(p_1,m_1)-m_2D_pH(p_2,m_2)\big)\cdot(p_1-p_2)\ge 0.
\]
Using $H(p,m)=|p|^2-m$ and $D_pH=2p$, we expand:
\begin{align*}
Q &= \big(-|p_1|^2+m_1+|p_2|^2-m_2\big)(m_1-m_2)
   +2\big(m_1p_1-m_2p_2\big)\cdot(p_1-p_2)\\
  &= (m_1-m_2)^2 + (m_1+m_2)|p_1-p_2|^2.
\end{align*}
Indeed, the cross terms cancel after a short computation.
Because $m_1,m_2\ge 0$, we clearly have $Q\ge 0$, and $Q=0$ only if
$m_1=m_2$ and $p_1=p_2$.  Thus (H2) holds, and the inequality is strict
whenever $(p_1,m_1)\ne(p_2,m_2)$.

\paragraph{\bf (H3) Growth.}
We have
\[
|H(p,m)| = ||p|^2-m|
 \le |p|^2 + m
 \le C(1+|p|^2+m^2),
\]
and
\[
|D_pH(p,m)|^2 = |2p|^2 = 4|p|^2
 \le C(1+|p|^2+m^2).
\]
Hence (H3) holds.

\subsection{Result for the explicit Hamiltonian}

Applying Theorem~\ref{thm:main} with this $H$ we obtain:

\begin{theorem}[Quadratic MFG]\label{thm:quadratic}
Let $V\in L^\infty(\T^d)$ and consider the mean field game
\begin{equation}\label{eq:MFG-quadratic}
\begin{cases}
- u(x) - |Du(x)|^2 - V(x) + m(x) = 0,\\[0.2em]
m(x) - \diver\big(m(x)Du(x)\big) = 1,\\[0.2em]
m(x)\ge 0,\quad \displaystyle\int_{\T^d}m(x)\,dx = 1.
\end{cases}
\end{equation}
Then there exists a unique pair $(m,u)\in L^2(\T^d)\times H^1(\T^d)$
solving \eqref{eq:MFG-quadratic} in the sense of
Definition~\ref{def:strong-solution}.  In particular $u$ satisfies
\[
- u - |Du|^2 - V + m = 0\quad\text{a.e. in }\T^d,
\]
and $m$ satisfies
\[
\int_{\T^d} m\varphi\,dx
 +\int_{\T^d} mDu\cdot D\varphi\,dx
 =\int_{\T^d}\varphi\,dx
 \qquad\forall\varphi\in C^\infty(\T^d).
\]
\end{theorem}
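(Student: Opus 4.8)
The plan is to obtain Theorem~\ref{thm:quadratic} as an immediate corollary of the abstract result Theorem~\ref{thm:main}. All that is required is to check that the concrete Hamiltonian $H(p,m)=|p|^2-m$ satisfies the structural hypotheses (H1)--(H3), and then to rewrite ``$A[m,u]=0$'' in PDE form for this particular $H$.

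First I would dispose of the two easy conditions. Since $p\mapsto|p|^2$ is convex and, for each fixed $p$, the map $m\mapsto|p|^2-m$ is affine with slope $-1$ (hence nonincreasing), (H1) holds. For (H3) one computes $D_pH(p,m)=2p$, so $|H(p,m)|\le|p|^2+m$ and $|D_pH(p,m)|^2=4|p|^2$, and both are manifestly bounded by $C(1+|p|^2+m^2)$. Neither of these requires any real work.

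The only genuine computation is the monotonicity inequality (H2). Substituting $H(p,m)=|p|^2-m$ and $D_pH=2p$ into
\[
Q:=\bigl(-H(p_1,m_1)+H(p_2,m_2)\bigr)(m_1-m_2)+\bigl(m_1D_pH(p_1,m_1)-m_2D_pH(p_2,m_2)\bigr)\cdot(p_1-p_2)
\]
and expanding, the mixed contributions (terms of the form $m_i\,p_i\cdot p_j$ and $(m_1-m_2)(|p_1|^2-|p_2|^2)$) cancel against the ones coming from the $-H$ factor, leaving the identity $Q=(m_1-m_2)^2+(m_1+m_2)|p_1-p_2|^2$. Verifying this cancellation, with careful bookkeeping of signs, is the one step I expect to be mildly delicate, though not deep; it is the heart of the matter. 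Once the identity is in hand, $m_1,m_2\ge0$ gives $Q\ge0$ at once, and $Q=0$ forces $m_1=m_2$ and, as long as $m_1+m_2>0$, also $p_1=p_2$. This is precisely the strict form of (H2), which is what Theorem~\ref{thm:main} consumes to produce uniqueness.

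Finally I would translate the abstract conclusion back to the concrete system. For $H(p,m)=|p|^2-m$ the Hamilton--Jacobi equation $-u-H(Du,m)-V=0$ reads $-u-|Du|^2+m-V=0$, i.e.\ the first line of~\eqref{eq:MFG-quadratic}; the transport equation and the mass constraint built into $K$ supply the remaining lines. Hence a strong solution in the sense of Definition~\ref{def:strong-solution} is exactly a pair $(m,u)\in K$ solving~\eqref{eq:MFG-quadratic}, with the transport equation understood in the weak form~\eqref{eq:FP-weak}. Theorem~\ref{thm:main} then applies verbatim and yields a unique such pair in $L^2(\T^d)\times H^1(\T^d)$, and the two displayed identities in the statement are just $A[m,u]=0$ written out. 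As a sanity check I would also confirm that the Legendre identity invoked in Lemma~\ref{lem:basic-estimate} specialises correctly: here $H^*(q,m)=\tfrac14|q|^2+m$, so $-mH(p,m)+mD_pH(p,m)\cdot p=m\bigl(|p|^2+m\bigr)\ge0$, which is consistent with (indeed slightly stronger than) the abstract energy estimate.
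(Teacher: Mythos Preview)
Your proposal is correct and mirrors the paper's own argument: verify (H1)--(H3) for $H(p,m)=|p|^2-m$ (with the key step being the identity $Q=(m_1-m_2)^2+(m_1+m_2)|p_1-p_2|^2$), then invoke Theorem~\ref{thm:main} and read off the concrete system. The only cosmetic differences are the order in which you treat the hypotheses and your additional Legendre sanity check, neither of which changes the strategy.
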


\begin{remark}
The explicit formula
\[
Q=(m_1-m_2)^2+(m_1+m_2)|Du_1-Du_2|^2
\]
for the monotonicity quantity shows directly that solutions are unique:
if two solutions $(m_1,u_1)$ and $(m_2,u_2)$ exist, then integrating
$Q$ over $\T^d$ yields zero, so $m_1=m_2$ and $Du_1=Du_2$, and one can
then show that $u_1$ and $u_2$ differ only by a constant; the equation
for $m$ forces this constant to be zero.
\end{remark}

% =========================================================
\section{References}

\end{document}